\documentclass[12pt,reqno]{amsart}
\usepackage{amsmath,amssymb}
\usepackage{enumitem}
\usepackage{graphics,color}
\usepackage{epsfig}
\usepackage{graphicx}
\usepackage{amsfonts,amsmath,amssymb,amscd}
\usepackage{xcolor}
\usepackage{hyperref}

\definecolor{ao(english)}{rgb}{0.0, 0.5, 0.0}
\definecolor{pink}{rgb}{1.0, 0.0, 1.0}

\hypersetup{colorlinks=true, linkcolor=ao(english),citecolor=ao(english)}

\setbox0=\hbox{$+$}
\newdimen\plusheight
\plusheight=\ht0
\def\+{\;\lower\plusheight\hbox{$+$}\;}

\setbox0=\hbox{$-$}
\newdimen\minusheight
\minusheight=\ht0
\def\-{\;\lower\minusheight\hbox{$-$}\;}

\setbox0=\hbox{$\cdots$}
\newdimen\cdotsheight
\cdotsheight=\plusheight
\def\cds{\lower\cdotsheight\hbox{$\cdots$}}

\newcommand{\Q}{\mbox{$\mathbb Q$}}
\renewcommand{\(}{\left\(}
\renewcommand{\)}{\right\)}
\renewcommand{\[}{\left[}

\renewcommand{\pmod}[1]{\,(\textup{mod}\,#1)}
\theoremstyle{plain}
\newtheorem{theorem}{Theorem}[section]
\newtheorem{lemma}[theorem]{Lemma}

\newtheorem{conjecture}[theorem]{Conjecture}
\theoremstyle{definition}

\theoremstyle{remark}

\numberwithin{equation}{section}

\begin{document}
\title[Proof of Two Conjectural $q$-Supercongruences]{Proof of Two Conjectural $q$-Supercongruences Related to Van Hamme's $\mathrm{(G.2)}$ Supercongruence}
\author{Liton Karmakar and Arijit Jana}
\address{Department of Mathematics, National Institute of Technology Silchar, Assam 788010, India}
\email{litonofficial8638@gmail.com}

\address{Department of Mathematics, National Institute of Technology Silchar, Assam 788010, India}
\email{jana94arijit@gmail.com}

\subjclass[2020]{33D15 · 11B65 · 05A10.}
\keywords{ $q$-supercongruences; cyclotomic polynomials; roots of unity\\}
\begin{abstract}
In this paper, we prove two conjectures concerning parametric $q$- supercongruences related to Van Hamme's $\mathrm{(G.2)}$ supercongruence. First, we confirm Tang\rq s conjecture \cite[Conjecture $4.1$]{Tang G2} by establishing the corresponding congruences modulo $[m]\Phi_m(q)^2$. Second, we prove a conjecture of Wang and Xu \cite[Conjecture $2$]{WangXu24}, extending their $q$-supercongruence modulo $\Phi_m(q)^5$ to modulo $[m]\Phi_m(q)^4$ when $m\equiv1\pmod4$.
\end{abstract}

\maketitle
\section{Introduction and statement of results}
In $1913$, Ramanujan \cite{berndt-rankin} mentioned the following identity in his first letter to Hardy:
\begin{equation}\label{Ramanujan G1}
\mathrm{(G.1)}\qquad
\sum_{n=0}^{\infty} (8n+1)
\frac{\left(\frac{1}{4}\right)_n^4}{n!^4}
=
\frac{2\sqrt{2}}{\sqrt{\pi}\,\Gamma\left(\frac{3}{4}\right)^2},
\end{equation}
where $\Gamma(x)$ denotes the Gamma function and $(x)_n=x(x+1)\cdots(x+n-1)$ denotes the rising factorial. Ramanujan did not provide a proof of the identity \eqref{Ramanujan G1} in his letter to Hardy, and Hardy \cite{hardy} was the first to prove the infinite series identity \eqref{Ramanujan G1}. In $1997$, Van Hamme \cite{van2020some} listed several conjectural supercongruences connecting Ramanujan's formulas for $\frac{1}{\pi}$ with the $p$-adic Gamma function. The elegant $p$-adic analogue of \eqref{Ramanujan G1} is given by
\begin{equation}\label{padic G2}
\mathrm{(G.2)}\qquad
\sum_{n=0}^{\frac{p-1}{4}} (8n+1)
\frac{\left(\frac{1}{4}\right)_n^4}{n!^4}
\equiv
p\frac{\Gamma_p\left(\frac{1}{2}\right)
\Gamma_p\left(\frac{1}{4}\right)}
{\Gamma_p\left(\frac{3}{4}\right)}
\pmod{p^3},
\qquad \text{if } p\equiv 1\pmod{4}.
\end{equation}
Here, $p$ is an odd prime, and $\Gamma_p(x)$ denotes the $p$-adic Gamma function. The supercongruence \eqref{padic G2} was subsequently extended to the modulus $p^4$ independently by Swisher \cite{swisher} and He \cite{he}. In $2023$, Tang \cite{Tang G2} established the following $q$-supercongruence, which provides a parametric generalization of the congruence \eqref{padic G2}.
\begin{theorem} \cite[Theorem 1.1]{Tang G2} \label{Tang G2 Thm 1}
Let $m \equiv 1 \pmod{4}$ be an integer greater than $1$ and let $0 \leq s \leq \frac{m-1}{4}$. Then
\begin{align} \label{Tang G2 Thm 1 eqn}
\sum_{n=s}^{ \frac{m-1}{4}}
& [8n+1]
\frac{(q;q^4)_{n-s}(q;q^4)_{n+s}(q;q^4)_n^2}
{(q^4;q^4)_{n-s}(q^4;q^4)_{n+s}(q^4;q^4)_n^2}
q^{2n} \notag \\
\equiv &
[m+4s]\,
\frac{(q;q^4)_s^2(q;q^4)_{\frac{m-1}{4}+s}(q^2;q^4)_{\frac{m-1}{4}-s}}
{(q^4;q^4)_s^2(q^4;q^4)_{\frac{m-1}{4}+s}(q;q^4)_{\frac{m-1}{4}-s}}
q^{3s-(\frac{m-1}{4})} \pmod{\Phi_m(q)^3}.
\end{align} 
\end{theorem}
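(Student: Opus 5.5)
The plan is to use the method of creative microscoping (Guo–Zudilin). Shift the summation index by $n\mapsto n+s$, so the left-hand side becomes a sum over $0\le k\le \frac{m-1}{4}-s$. We then introduce an extra parameter $a$ into two of the four numerator factors, replacing
\[
(q;q^4)_{n}^2 \quad\text{by}\quad (aq;q^4)_n\,(q/a;q^4)_n ,
\]
and likewise $(q^4;q^4)_n^2$ by $(aq^4;q^4)_n(q^4/a;q^4)_n$. After the shift, the summand is a very-well-poised basic hypergeometric term in base $q^4$, with $[8n+1]$ supplying the well-poised factor. The target is an $a$-parametric congruence modulo
\[
\Phi_m(q)\,(1-aq^m)(a-q^m),
\]
whose right-hand side is an explicit product $R(a)$ that reduces to the stated right-hand side at $a=1$.

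Step 1 is the case $a=q^{-m}$ (and, by the $a\leftrightarrow a^{-1}$ symmetry, $a=q^{m}$). Then $(aq;q^4)_n=(q^{1-m};q^4)_n$ vanishes for $n>\frac{m-1}{4}$, so the truncated sum equals a terminating very-well-poised series. I would apply Watson's $_8\phi_7$ transformation, or directly Jackson's $_6\phi_5$ summation if the fifth and sixth parameters can be arranged as $q^{1-m}$ and a remaining free parameter. This should turn the series into a terminating balanced $_4\phi_3$ or a closed product. The aim is an exact identity: at $a=q^{\pm m}$ the sum equals $R(q^{\pm m})$. This yields the congruence modulo $(1-aq^m)(a-q^m)$.

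Step 2 is the congruence modulo $\Phi_m(q)$ alone, uniformly in $a$. The idea is to pair the term $k$ with the term $\frac{m-1}{4}-s-k$ (or $\frac{m-1}{4}-k$ before shifting). Modulo $\Phi_m(q)$ one has $q^m\equiv1$, and the standard reflection
\[
\frac{(aq;q^4)_{\frac{m-1}{4}-k}}{(q^4/a;q^4)_{\frac{m-1}{4}-k}} \equiv (-a)^{\frac{m-1}{4}-2k}\frac{(q/a;q^4)_k}{(aq^4;q^4)_k}\,q^{(\ldots)} \pmod{\Phi_m(q)}
\]
should make paired terms cancel, because $[8n+1]$ changes sign under the reflection. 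Hence the sum is $\equiv 0\pmod{\Phi_m(q)}$. One then checks that $R(a)$ is also $\equiv0$ modulo $\Phi_m(q)$. For this, the factor $[m+4s]$ together with the ratio $(q;q^4)_{\frac{m-1}{4}+s}/(q;q^4)_{\frac{m-1}{4}-s}$ should contain the factor $1-q^m$, so that exactly one power of $\Phi_m(q)$ appears.

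Step 3 combines the two. Since $\Phi_m(q)$ and $(1-aq^m)(a-q^m)$ are coprime in $\mathbb{Z}[a,q]$, the Chinese remainder theorem gives the parametric congruence modulo $\Phi_m(q)(1-aq^m)(a-q^m)$. Setting $a=1$ turns $(1-q^m)^2$ into a multiple of $\Phi_m(q)^2$, which yields the congruence modulo $\Phi_m(q)^3$. One must also verify that the denominators $(aq^4;q^4)_n(q^4/a;q^4)_n$ at $a=1$ are coprime to $\Phi_m(q)$ for $n\le\frac{m-1}{4}+s$. This holds because $4n<m$ in the relevant range, except near the top, where the numerator vanishing must be checked.

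I expect the main obstacle to be Step 1: choosing the placement of $a$ and the precise hypergeometric transformation so that the evaluation at $a=q^{\pm m}$ produces exactly the asymmetric right-hand side, including $(q^2;q^4)_{\frac{m-1}{4}-s}/(q;q^4)_{\frac{m-1}{4}-s}$ and the power $q^{3s-\frac{m-1}{4}}$. I would first try Jackson's $_6\phi_5$ summation with parameters $q^{1+8s}$, $q^{1}$, $aq^{1+4s}$ and $q^{1+4s}/a$ over base $q^4$, and fall back to Watson's transformation followed by the $q$-Pfaff–Saalschütz sum if a free parameter remains.
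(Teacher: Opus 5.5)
The paper itself does not prove this statement. It is quoted from Tang and used only as the $\Phi_m(q)^3$ input. The paper's own argument (evaluation at primitive $d$-th roots of unity with the pairing $r\mapsto g-r$ of Lemma~\ref{P6 lemma 4}) gives only first-order divisibility by each $\Phi_d(q)$. Your creative-microscoping route is therefore different from anything in the paper, and it is the right kind of argument: root-of-unity evaluation cannot detect $\Phi_m(q)^2$, while your $a$-deformation produces the extra $(1-q^m)^2$ at $a=1$.

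Your first guess in Step 1 already works, with no Watson transformation needed. Write $M=\frac{m-1}{4}$, $N=M-s$ and $n=k+s$. The sum is then a prefactor times a very-well-poised ${}_6\phi_5$ in base $q^4$, with $A=q^{1+8s}$, $b=q$, $c=aq^{1+4s}$, $d=q^{1+4s}/a$ and argument $q^2=Aq^4/(bcd)$. At $a=q^{-m}$ one has $c=q^{-4N}$. The terminating ${}_6\phi_5$ summation, together with
$(q^{3+4s-m};q^4)_N/(q^{4+4s-m};q^4)_N=q^{-N}(q^2;q^4)_N/(q;q^4)_N$ and $[8s+1](q;q^4)_{2s}(q^{5+8s};q^4)_N=[m+4s](q;q^4)_{M+s}$, gives $S(q^{\pm m})=R(q^{\pm m})$ exactly, where
\begin{equation*}
R(a)=[m+4s]\,\frac{(aq;q^4)_s\,(q/a;q^4)_s\,(q;q^4)_{M+s}\,(q^2;q^4)_{M-s}}{(aq^4;q^4)_s\,(q^4/a;q^4)_s\,(q^4;q^4)_{M+s}\,(q;q^4)_{M-s}}\,q^{3s-M}.
\end{equation*}
This $R(a)$ is symmetric under $a\leftrightarrow a^{-1}$ and equals the stated right-hand side at $a=1$. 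It is $\equiv 0\pmod{\Phi_m(q)}$ because $[m+4s](q;q^4)_{M+s}=[m](q;q^4)_M(q^{m+4};q^4)_s$.

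Step 2, however, needs correcting. Your primary pairing $k\leftrightarrow N-k$ means $n\leftrightarrow M+s-n$, which is not the same as your parenthetical $n\leftrightarrow M-n$, and it fails for $s\ge1$. For example, with $m=13$ and $s=1$ it pairs the $n=1$ term, which is nonzero modulo $\Phi_{13}(q)$, with the $n=3$ term, which vanishes.

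The correct mechanism is Lemma~\ref{P6 lemma 4} with $d=m$, carried out with $a$ present:
\begin{itemize}
\item For $M-s<n\le M$, the factor $(q;q^4)_{n+s}$ contains $1-q^m$, so these terms are $0$ modulo $\Phi_m(q)$.
\item On $s\le n\le M-s$, the map $n\mapsto M-n$ preserves the range and turns the indices $\{n-s,n+s\}$ into $\{M-(n+s),M-(n-s)\}$.
\item The fixed point $n=M/2$, if present, carries the factor $[m]$.
\item The ratio of paired terms is $-q^{(2M-1)m}\equiv -1$.
\end{itemize}

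Your reflection formula is also misstated. The correct congruence is
\begin{equation*}
\frac{(aq;q^4)_{M-k}}{(q^4/a;q^4)_{M-k}}\equiv(-a)^{M-2k}q^{2M^2-M+3k}\,\frac{(aq;q^4)_k}{(q^4/a;q^4)_k}\pmod{\Phi_m(q)},
\end{equation*}
with $a$ not inverted on the right. Your version fails already at $k=1$, although the error is harmless here because the $a$- and $a^{-1}$-factors enter symmetrically.

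Finally, the denominator check has no exceptional range near the top. $\Phi_m(q)\mid 1-q^{4j}$ only if $m\mid j$, and every index satisfies $j\le 2M<m$.
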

At the end of the paper, Tang posed the following open problem.
\begin{conjecture} \cite[Conjecture $4.1$]{Tang G2} \label{Conj Tang G2}
    The $q$-supercongruence \eqref{Tang G2 Thm 1 eqn} is also true modulo $[m]\Phi_m(q)^2.$
\end{conjecture}
\par To continue the $q$-story, we first recall some standard $q$-notation. The \emph{$q$-shifted factorial} for complex numbers $x$ and $q$, with $|q|<1$, is defined by
\begin{align*}
(x;q)_0 &:= 1,\\
(x;q)_n &:= \prod_{k=0}^{n-1}(1-xq^k), \qquad n\geq 1.
\end{align*}
The $q$-integer is defined by
$$[m]=[m]_q:=1+q+q^2+\cdots+q^{m-1}
=\frac{1-q^m}{1-q}.$$
Moreover, the $m$-th cyclotomic polynomial $\Phi_m(q)$ is defined by
$$\Phi_m(q):=
\prod_{\substack{1\leq d\leq m\\ \gcd(d,m)=1}}
\left(q-\xi^d\right),$$
where $\xi$ is a primitive $m$-th root of unity. Rational function congruences are interpreted in the standard reduced-denominator sense. More precisely, if
$R(q)=\frac{A(q)}{B(q)},$ with $ A(q),B(q)\in\mathbb{Q}[q], \gcd(A(q),B(q))=1,$ and $B(q)$ is relatively prime to the given modulus, then
$R(q)\equiv 0\pmod{M(q)}$ means that $M(q)$ divides $A(q)$ in $\mathbb{Q}[q]$.


\par Recently, Wang and Xu \cite{WangXu24} established several new $q$-supercongruences. In particular, they proved the following $q$-supercongruence modulo the fifth power of a cyclotomic polynomial.

\begin{theorem} \label{WangXu Thm}
     Let $m>1$ be an odd integer, $0\le s\le (m-t)/4$ and
$t\in\{1,3\}$ such that $m \equiv t \pmod 4$. Then, modulo
$\Phi_m(q)^5$,
\begin{align} \label{WangXu Thm eqn}
&\sum_{n=s}^{(tm-1)/4}
 [8n+1]
\frac{(q;q^4)_{n-s}(q;q^4)_{n+s}(q;q^4)_n^{4}}
{(q^4;q^4)_{n-s}(q^4;q^4)_{n+s}(q^4;q^4)_n^{4}}
q^{5n} \notag
\\
& \equiv
[tm+4s] \frac{(q;q^4)_s^4 (q^2;q^4)_{\frac{tm-1}{4}-s} (q;q^4)_{\frac{tm-1}{4}+s}}{(q^4;q^4)_s^4 (q;q^4)_{\frac{tm-1}{4}-s} (q^4;q^4)_{\frac{tm-1}{4}+s}} q^{\frac{1-tm}{4} + 6s} \sum_{n=0}^{\frac{tm-1}{4}-s} \frac{(q,q^3,q^{4s+1},q^{4s+1};q^4)_n}{(q^2,q^4,q^{4s+4},q^{4s+4};q^4)_n}q^{4n} \notag
\\
& \qquad \times \left\{1 - (2-q^{tm}) [tm]^2 \sum_{j=1}^{n} \left(\frac{q^{4(s+j)-3}}{[4(s+j)-3]^2} +  \frac{q^{4(s+j)}}{[4(s+j)]^2}\right)   \right\}.
\end{align}
\end{theorem}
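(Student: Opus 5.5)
The plan is to prove the congruence by the creative microscoping method. We embed the sum in a very-well-poised basic hypergeometric series with free parameters, evaluate that series exactly by a classical transformation, and then return to the original parameters by Taylor-type expansion in the cyclotomic direction.

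\textbf{Reindexing and parametrization.} First shift $n\mapsto n+s$. Then $(q;q^4)_{n-s}(q;q^4)_{n+s}/\bigl((q^4;q^4)_{n-s}(q^4;q^4)_{n+s}\bigr)$ becomes, up to the explicit prefactor $(q;q^4)_s^{\,\cdot}/(q^4;q^4)_s^{\,\cdot}$, the ratio $(q^{8s+1};q^4)_n/(q^4;q^4)_n$. Together with $[8n+8s+1]$, this makes the left-hand side the truncation of a very-well-poised series with base parameter $q^{8s+1}$. The fourth power $(q;q^4)_{n+s}^4/(q^4;q^4)_{n+s}^4$ contributes four numerator parameters, all equal to $q^{4s+1}$. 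I would deform them in pairs to $q^{4s+1}a,\ q^{4s+1}/a,\ q^{4s+1}b,\ q^{4s+1}/b$, keeping the denominators well-poised, so the summand is a genuine ${}_8\phi_7$ term in base $q^4$.

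\textbf{Exact evaluation at a specialization.} Specialize $a=q^{-tm}$ (equivalently $a=q^{tm}$). The series then terminates at $n=(tm-1)/4-s$, and Watson's ${}_8\phi_7\to{}_4\phi_3$ transformation gives a closed form. Its prefactor consists of $q$-shifted factorials matching $[tm+4s]\,(q^2;q^4)_{\frac{tm-1}{4}-s}(q;q^4)_{\frac{tm-1}{4}+s}/\bigl((q;q^4)_{\frac{tm-1}{4}-s}(q^4;q^4)_{\frac{tm-1}{4}+s}\bigr)$. Its balanced ${}_4\phi_3$ has parameters $(q,q^3,q^{4s+1}b,q^{4s+1}/b)$ over $(q^2,q^4,q^{4s+4}b,q^{4s+4}/b)$, which is exactly the inner sum on the right-hand side when $b=1$. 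This yields the two-parameter identity modulo $(1-aq^{tm})(a-q^{tm})$.

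\textbf{Second congruence and reassembly.} Separately, I would show that both sides are congruent modulo $\Phi_m(q)$ with $a,b$ free. The left side vanishes term-wise beyond the truncation point, and the pairing $n\leftrightarrow \frac{tm-1}{4}-n$ of terms gives the Van Hamme-type antisymmetry. The two congruences are then combined by the Chinese remainder theorem; since $\Phi_m(q)$ and $(1-aq^{tm})(a-q^{tm})$ are coprime as polynomials in $a$, this gives a congruence modulo $\Phi_m(q)(1-aq^{tm})(a-q^{tm})$. Setting $a=1$ turns the modulus into $\Phi_m(q)^3$ for the $b$-deformed statement.

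\textbf{Main obstacle: raising $\Phi_m(q)^3$ to $\Phi_m(q)^5$.} This requires also letting $b\to1$ while keeping track of the $b=q^{\pm tm}$ information. Rather than setting $b=1$ outright, I would specialize $b=q^{tm}$ in the $a=1$ identity, where the left side is again a terminating instance, and then compare $b=q^{tm}$ with $b=1$. The key computation is
\[
(q^{4s+1+tm};q^4)_n(q^{4s+1-tm};q^4)_n\equiv (q^{4s+1};q^4)_n^2\Bigl(1-(2-q^{tm})[tm]^2\sum_{j=1}^{n}\frac{q^{4(s+j)-3}}{[4(s+j)-3]^2}\Bigr)\pmod{\Phi_m(q)^4},
\]
together with the analogous expansion for the $q^{4s+4}$ denominators. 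Each such expansion follows from $(1-xq^{tm})(1-x q^{-tm})=(1-x)^2+x\,q^{-tm}(1-q^{tm})^2$, expanded to second order, with $[tm]^2$ absorbing $\Phi_m(q)^2$. Multiplying these expansions produces precisely the braced correction factor, with the quadratic cross-terms dying modulo $\Phi_m(q)^4$. I expect the delicate point to be checking that all denominators $[4(s+j)-3]$, $[4(s+j)]$, $(q^2;q^4)_n$ and $(q^4;q^4)_n$ in the range $n\le \frac{tm-1}{4}-s$ are coprime to $\Phi_m(q)$. This holds except possibly at indices $j$ with $m\mid 4(s+j)-3$ when $t=3$, and those indices need a separate limiting argument. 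Once that is settled, the extra factor $\Phi_m(q)$ from the prefactor $[tm+4s]$, or from the symmetric-pairing congruence, supplies the last power and gives the congruence modulo $\Phi_m(q)^5$.
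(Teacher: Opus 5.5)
The paper does not prove this theorem; it quotes it from Wang and Xu and uses it only as input. Your proposal therefore has to stand on its own. The framework is the right one: shift, deform $q^{4s+1}$ into $q^{4s+1}a^{\pm1},q^{4s+1}b^{\pm1}$, terminate at $a=q^{\pm tm}$, apply Watson, and add a congruence modulo $\Phi_m(q)$. However, there is a genuine gap in the passage from $\Phi_m(q)^3$ to $\Phi_m(q)^5$.

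\textbf{The Watson output.} Write $L(a,b)$ for your deformed sum and $N=\frac{tm-1}{4}-s$. The assignment in Watson's formula that yields the theorem's prefactor $P$ and the parameters $q,q^3$ over $q^2,q^4$ puts $q,q^{4s+1+tm}$ in the ${}_4\phi_3$ numerator. The pair $q^{4s+1}b^{\pm1}$ then appears only downstairs, as $q^{4s+4}b^{\mp1}$. This gives
\begin{equation*}
L(q^{\pm tm},b)=P\,G(b),\qquad G(b)=\sum_{n=0}^{N}\frac{(q,q^3,q^{4s+1-tm},q^{4s+1+tm};q^4)_n}{(q^2,q^4,q^{4s+4}b,q^{4s+4}/b;q^4)_n}\,q^{4n}.
\end{equation*}
So your ${}_4\phi_3$, with $q^{4s+1}b^{\pm1}$ in the numerator, is not what Watson gives, and at $b=1$ it is not the inner sum.

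\textbf{The missing step.} After you set $a=1$ you know $L(1,b)\equiv P\,G(b)\pmod{\Phi_m(q)^3}$. Specializing $b=q^{tm}$ only returns $P\,G(1)\equiv P\,G(q^{tm})$, because $L(1,q^{tm})=L(q^{tm},1)=P\,G(1)$ exactly. Nothing about $L(1,1)$ improves. What is missing is to keep both parameters alive and use the symmetry $a\leftrightarrow b$:
\begin{itemize}
\item $P\,[G(a)+G(b)-G(q^{tm})]$ agrees with $L(a,b)$ on $a=q^{\pm tm}$ and on $b=q^{\pm tm}$.
\item Both functions are $\equiv0\pmod{\Phi_m(q)}$.
\item Hence they agree modulo $\Phi_m(q)(1-aq^{tm})(a-q^{tm})(1-bq^{tm})(b-q^{tm})$, and setting $a=b=1$ gives $L(1,1)\equiv P\,[2G(1)-G(q^{tm})]\pmod{\Phi_m(q)^5}$.
\end{itemize}

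\textbf{The braced factor.} Only this combination produces the theorem's correction term; your computation gives the wrong sign. The correct identity is $(1-xq^{tm})(1-xq^{-tm})=(1-x)^2-xq^{-tm}(1-q^{tm})^2$; your sign is wrong. Combined with $q^{-tm}\equiv 2-q^{tm}\pmod{\Phi_m(q)^2}$, it gives, modulo $\Phi_m(q)^4$:
\begin{itemize}
\item $(q^{4s+1-tm},q^{4s+1+tm};q^4)_n\equiv (q^{4s+1};q^4)_n^2(1-X_n)$;
\item $2-(q^{4s+4};q^4)_n^2/(q^{4s+4-tm},q^{4s+4+tm};q^4)_n\equiv 1-Y_n$.
\end{itemize}
Here $X_n$ and $Y_n$ are $(2-q^{tm})[tm]^2$ times the sums of $q^{4(s+j)-3}/[4(s+j)-3]^2$ and of $q^{4(s+j)}/[4(s+j)]^2$, respectively. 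So $2G(1)-G(q^{tm})$ yields $1-X_n-Y_n$, as in the statement. The $\Phi_m(q)^4$ error is multiplied by $P\equiv0\pmod{\Phi_m(q)}$, which supplies the fifth power. Multiplying a numerator expansion by the inverse of a deformed denominator, as you propose, gives $1-X_n+Y_n$ instead.

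\textbf{Two smaller corrections.}
\begin{itemize}
\item The factor $\Phi_m(q)$ in $P$ does not come from $[tm+4s]$ alone; that factor is prime to $\Phi_m(q)$ when $s\ge1$. It comes from $[tm+4s](q;q^4)_{\frac{tm-1}{4}+s}=[tm](q;q^4)_{\frac{tm-1}{4}}(q^{tm+4};q^4)_s$, the identity the paper uses at the end.
\item Indices with $m\mid 4(s+j)-3$ never occur, since these numbers are $\le tm-4$ and $\equiv1\pmod 4$. The real $t=3$ issue is the factor $1-q^{2m}$ in $(q^2;q^4)_n$, which is cancelled by $1-q^m$ in $(q^3;q^4)_n$.
\end{itemize}
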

At the end of their paper, they posed the following open problem.
\begin{conjecture} \cite[Conjecture $2$]{WangXu24} \label{WangXu Conj}
    Theorem \ref{WangXu Thm} with $t=1$ is true modulo $[m]\Phi_m(q)^4.$
\end{conjecture}
It is worth mentioning that Conjecture \ref{Conj Tang G2} is known to hold in the case $s=0$; see \cite{GuoMJS20, guo2019q}. Based on numerical evidence, Tang observed that both sides of \eqref{Tang G2 Thm 1 eqn} should be congruent to $0$ modulo $[m]$, although proving this appears to be difficult. Tang further remarked that the well-known method for establishing $q$-congruences modulo $[m]$, introduced in \cite{guo2019q}, does not seem to be applicable in this setting due to the presence of the additional parameter $s$. Very recently, Liu and Tao \cite{liu2026proofs} confirmed a $q$-supercongruence conjectured by Guo \cite[Conjecture 4.1]{guo2022new}. Motivated by their work, we establish the following two theorems.
\begin{theorem} \label{P6 Main Thm 1}
    Conjecture \ref{Conj Tang G2} is true.
\end{theorem}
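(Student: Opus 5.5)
Since $[m]=\prod_{d\mid m,\,d>1}\Phi_d(q)$ and Theorem~\ref{Tang G2 Thm 1} already gives the congruence modulo $\Phi_m(q)^3$, it suffices to show that both sides of \eqref{Tang G2 Thm 1 eqn} vanish modulo $\Phi_d(q)$ for every divisor $d>1$ of $m$ with $d<m$. Indeed, $\Phi_m(q)^3$ and $\prod_{d\mid m,\,1<d<m}\Phi_d(q)$ are coprime and their product is $[m]\Phi_m(q)^2$. So we fix such a $d$ and a primitive $d$-th root of unity $\zeta$, and aim to prove that the left-hand side and the right-hand side both vanish at $q=\zeta$. Since $m\equiv1\pmod 4$, every divisor $d$ of $m$ is odd. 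Write $d\equiv r\pmod 4$ with $r\in\{1,3\}$.

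\emph{Right-hand side.} Here I would look for a factor $1-q^{j}$ with $d\mid j$ in the numerator that is not cancelled by the denominator. The factor $[m+4s]$ is not useful in general, since $d\mid m+4s$ iff $d\mid s$. Instead I would count multiples of $d$ among the exponents in $(q;q^4)_{\frac{m-1}{4}+s}$, $(q^2;q^4)_{\frac{m-1}{4}-s}$ (numerator) versus $(q^4;q^4)_{\frac{m-1}{4}+s}$, $(q;q^4)_{\frac{m-1}{4}-s}$ (denominator), treating $(q;q^4)_s^2/(q^4;q^4)_s^2$ similarly. For the product $(q^a;q^4)_N$, the order of vanishing at $\zeta$ is the number of $k<N$ with $a+4k\equiv 0\pmod d$, and these counts can be computed explicitly from $m/d$, $s$ and $d$. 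The intended outcome is a net surplus of at least one zero in the numerator. I expect this to follow from comparing floor-type counts $\lfloor (4N+c)/(4d)\rfloor$, though it may require splitting according to the residue of $s$ modulo $d$ and to $r$.

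\emph{Left-hand side.} This is the harder part, because the parameter $s$ destroys the symmetry $n\mapsto \frac{m-1}{4}-n$ used in \cite{guo2019q}. Following the approach of Liu and Tao \cite{liu2026proofs}, I would write $n=jd+k$ with $0\le k<d$ and use $\zeta^{d}=1$ to factor each summand at $q=\zeta$. The factor $[8n+1]$ and the quotients $(q;q^4)_{jd+k}/(q^4;q^4)_{jd+k}$ split into a limiting ratio in $j$ (computed via $q$-Lucas-type evaluations of $(q;q^4)_{d}/(q^4;q^4)_d$ type blocks) times the corresponding $k$-term. The terms with $n\pm s$ crossing a block boundary must be handled carefully: one checks whether $(q;q^4)_{n-s}$ or $(q;q^4)_{n+s}$ already vanishes at $\zeta$ more times than the denominator. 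After this the sum should reduce to a product of an inner finite sum over $k$ and an outer sum over $j$. The inner sum is itself a truncated instance of a $q$-WZ or very-well-poised ${}_6\phi_5$ summation, namely the identity underlying Theorem~\ref{Tang G2 Thm 1} with $m$ replaced by $d$ at $q=\zeta$ (or one of its variants at $q\to\zeta$). I would then show that it vanishes, or that its outer $j$-coefficient does.

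The main obstacle I anticipate is the boundary bookkeeping: the index shift by $s$ means the $k$-ranges for $n-s$ and $n+s$ do not align with the blocks of length $d$. The argument therefore has to be split according to $s=ad+b$, and in each case one must show the sum over a full period vanishes. If the direct evaluation resists, I would instead invoke the creative-microscoping parametrization $(q;q^4)\to(aq,q/a;q^4)$, which Tang presumably used to prove Theorem~\ref{Tang G2 Thm 1}. At the root $\zeta$ the parametric identity should specialize to a terminating summation whose value visibly contains a vanishing factor. Letting $a\to1$ would then give the vanishing of the left-hand side modulo $\Phi_d(q)$.
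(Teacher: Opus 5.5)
\textbf{There is a genuine gap: the target you set is false for divisors $d\equiv 3\pmod 4$.}

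Your first-paragraph reduction is valid as a sufficient condition. The problem is the target itself: that both sides of \eqref{Tang G2 Thm 1 eqn} vanish at every primitive $d$-th root of unity $\zeta$, for $d\mid m$ with $1<d<m$. This fails when $d\equiv 3\pmod 4$, a case you explicitly allow.

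Take $m=9$, $d=3$, $s=1$, and let $q=\omega$ be a primitive cube root of unity.
\begin{itemize}
\item On the left, the $n=1$ term contains $[9]_\omega=0$. In the $n=2$ term, every factor of $(q;q^4)_1(q;q^4)_3(q;q^4)_2^2/\bigl((q^4;q^4)_1(q^4;q^4)_3(q^4;q^4)_2^2\bigr)$ cancels at $\omega$ except $(1-q^9)/(1-q^{12})\to\frac34$. So the left side equals $[17]_\omega\cdot\frac34\cdot\omega^4=\frac34(1+\omega)\omega=-\frac34$.
\item On the right, $[13]_\omega=1$, $(q^2;q^4)_1/(q;q^4)_1\to1+\omega$ and $(q;q^4)_3/(q^4;q^4)_3\to\frac34$. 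This again gives $-\frac34$.
\end{itemize}
So the congruence does hold modulo $\Phi_3(q)$, but neither side is divisible by $\Phi_3(q)$. No zero count on the right and no cancellation on the left can succeed here.

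The structural reason is this. For $d=4g+3$, the vanishing factor of $(q;q^4)_M$ in each period of length $d$ sits at offset $3g+2$, since $4(3g+2)+1=3d$. Hence $H_M(\zeta)$ vanishes only when $\langle M\rangle_d>3g+2$, which never happens for $d=3$. For such $d$ you must show directly that the two sides take the same value at $\zeta$.

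You should know that the paper does not do this either. It fixes $d=4g+1$ throughout Sections~2--3, and its claim \eqref{P6 claim} fails for $m=9$, $s=1$. Its argument therefore only covers divisors $d\equiv1\pmod4$, for example $m$ all of whose prime factors are $\equiv1\pmod 4$.

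\textbf{For $d=4g+1$ your strategy is the paper's, but the decisive step is left open.}

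The paper's mechanism runs as follows.
\begin{itemize}
\item Lemma~\ref{P6 lemma 1} gives $H_{ad+b}(\zeta)=J_aH_b(\zeta)$ for $b\le g$, and $0$ for $g<b<d$.
\item Writing $s=ud+v$, the surviving residues $r=\langle n\rangle_d$ in each period form the interval $[v,g-v]$, or $[e,g-e]$ if $v=d-e$. On this interval the period constants are uniform; this is your inner/outer factorization.
\item The inner sum vanishes by the reflection $r\mapsto g-r$. Lemma~\ref{P6 lemma 2} gives $H_{g-x}(\zeta)=(-1)^g\zeta^{2g^2-g+3x}H_x(\zeta)$ and $[8(g-r)+1]_\zeta=-\zeta^{-(8r+1)}[8r+1]_\zeta$. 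Together these make the terms at $r$ and $g-r$ exact negatives, and the fixed point $2r=g$ is killed by $[d]_\zeta=0$.
\end{itemize}

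Your alternative of reading the inner sum as the left side of Theorem~\ref{Tang G2 Thm 1} with $(m,s)\to(d,v)$ at $q=\zeta$ does work. That right side vanishes at $\zeta$ through $[d]$ when $v=0$, and through the factor $1-q^d$ of $(q;q^4)_{g+v}$ when $1\le v\le g$. For $v=d-e$ the inner sum coincides with the one for $v=e$. But you did not commit to this route.

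You also did not address the truncation. You need the observation that $(m-1)/4\equiv g\pmod d$, so the last partial period already contains the whole support and contributes zero.

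On the right side, your zero count is the correct tool; the paper's one-line identity for $[m+4s](q;q^4)_{\frac{m-1}{4}+s}$ ignores the $\Phi_d$-factors in the denominators. Carried out, the count gives order of vanishing at $\zeta$ equal to $1,1,3,1$ according as $v=0$, $1\le v\le g$, $g<v\le 3g$, or $3g<v<d$.
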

\begin{theorem} \label{P6 Main Thm 2}
    Conjecture \ref{WangXu Conj} is true.
\end{theorem}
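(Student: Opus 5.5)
The statement immediately above is Theorem \ref{P6 Main Thm 2}, i.e.\ Conjecture \ref{WangXu Conj}. Since $[m]=\prod_{d\mid m,\,d>1}\Phi_d(q)$ and Theorem \ref{WangXu Thm} already gives the congruence modulo $\Phi_m(q)^5$, it suffices to prove that both sides of \eqref{WangXu Thm eqn} with $t=1$ are congruent to $0$ modulo $\Phi_d(q)$ for every divisor $d>1$ of $m$ with $d<m$. Then $[m]\Phi_m(q)^4=\Phi_m(q)^5\prod_{d\mid m,1<d<m}\Phi_d(q)$, and these factors are pairwise coprime. So the plan is to fix such a $d$, let $\zeta$ be a primitive $d$-th root of unity, and show that both sides vanish (or have a removable-pole zero) at $q=\zeta$.

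\textbf{Right-hand side.} The factor $[m+4s]$ does not vanish at $\zeta$ in general, so the work lies elsewhere. I will use the prefactor $(q^2;q^4)_{\frac{m-1}{4}-s}(q;q^4)_{\frac{m-1}{4}+s}/\bigl((q;q^4)_{\frac{m-1}{4}-s}(q^4;q^4)_{\frac{m-1}{4}+s}\bigr)$ and count $\Phi_d$-multiplicities. The factors $1-q^{4k+1}$ with $4k+1\equiv0\pmod d$ and $k<\frac{m-1}{4}+s$ number about $\lfloor (m+4s)/(4d)\rfloor$ plus a correction. The denominator $(q^4;q^4)$ contributes $\lfloor(\frac{m-1}{4}+s)/d\rfloor$. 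The tail $(q^2;q^4)/(q;q^4)$ up to $\frac{m-1}{4}-s$ has to be compared too. The inner sum's terms $(q,q^3,q^{4s+1},q^{4s+1};q^4)_n/(q^2,q^4,q^{4s+4},q^{4s+4};q^4)_n$ may have poles at $\zeta$. I expect to need the standard trick of splitting the sum over $n$ into blocks of length $d$, writing $n=jd+k$, and using $(a;q^4)_{jd+k}$ at $q=\zeta$, which factorizes as $(a;\zeta^4)_d^{\,j}(a;\zeta^4)_k$ (a $q$-Lucas-type argument). Since $[m]^2\equiv0$ at $\zeta$, the braces reduce to $1$ at $q=\zeta$ apart from pole contributions from $[4(s+j)-3]^2$ and $[4(s+j)]^2$. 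Those poles have order at most $2$ while $[m]^2$ vanishes to order $2$, so I must track the finite part carefully.

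\textbf{Left-hand side.} I plan to follow the method of Liu and Tao \cite{liu2026proofs}. Write $n=jd+k$ and evaluate the summand at $q=\zeta$ via the factorization above, reducing to the product of a sum over $j$ (a classical binomial-type sum) and a finite sum $\sum_k[8k+1]\cdots\zeta^{5k}$ over one period. The inner sum over $k$ is a truncated instance of the same $_6\phi_5$/very-well-poised series, now with $m$ replaced by $d$. By Theorem \ref{WangXu Thm} applied with $m\to d$ (with $t\in\{1,3\}$ according to $d\bmod 4$, which is why both residues of $t$ are available), or more directly by the $q$-Dougall/Watson summation underlying it, this finite sum can be expressed in closed form at $q=\zeta$. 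That closed form contains a factor $(\zeta;\zeta^4)_{\cdot}$ which vanishes because some $1-\zeta^{4i+1}=0$. The presence of $s$ shifts the blocks: the terms $n-s$ and $n+s$ straddle block boundaries. I will handle this by writing $s=ud+r$ and treating the shifted Pochhammers $(q;q^4)_{n\pm s}$ separately.

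\textbf{Main obstacle.} I expect the main obstacle to be this $s$-dependent block decomposition: showing that for every residue $r$ the one-period sum vanishes at $\zeta$, including the boundary blocks where $n-s<d$. If direct evaluation fails, my fallback is to introduce an extra parameter $a$ (replacing $q^{4s}$-type factors by $aq^{\cdot}$), prove the parametric identity modulo $\Phi_d$ via the creative microscoping limit $a\to1$, and then specialize.
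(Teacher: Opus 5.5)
\textbf{There is a genuine gap: the sufficient condition you aim for is false for some divisors of $m$.} Reducing to the factors $\Phi_d(q)$ with $d\mid m$, $1<d<m$, is fine. But you then try to show that both sides of \eqref{WangXu Thm eqn} vanish at a primitive $d$-th root of unity $\zeta$, and this fails when $d\equiv 3\pmod{4}$. Such divisors occur even though $m\equiv 1\pmod{4}$.

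Take $m=9$, $s=1$, $d=3$.
On the left, the $n=1$ term dies because of $[9]$. For $n=2$ one has $[17]_\zeta=1+\zeta$, $H_1(\zeta)=H_2(\zeta)=1$, $H_3(\zeta)=\lim_{q\to\zeta}(1-q^9)/(1-q^{12})=3/4$ and $\zeta^{10}=\zeta$. So the left side equals $\frac34(\zeta+\zeta^2)=-\frac34$.
The right side also equals $-\frac34$. Its inner sum is $1$ at $\zeta$, and the $\Phi_3(q)$ in $[13](q;q^4)_3=[9](q;q^4)_2(1-q^{13})$ is cancelled by $1-q^{12}$ in $(q^4;q^4)_3$.

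So here the congruence modulo $\Phi_3(q)$ holds because the two sides share the same non-zero value, and no vanishing argument can prove it. Two further points confirm that your machinery breaks down for such $d$:
for $d=4g+3$, the limit of $H_M$ at $\zeta$ vanishes only when $\langle M\rangle_d>3g+2$, so complete blocks no longer cancel (for $d=3$, $s=1$ the block $n=1,2,3$ gives $-\frac34+(\frac34)^5$);
and Theorem \ref{WangXu Thm} at level $d$ with $t=3$ requires $s\le (d-3)/4$, so it is unavailable, e.g.\ for $d=3$, $v=1$.

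You should know that the paper does not handle this case either. It fixes $d=4g+1$ throughout: Lemma \ref{P6 lemma 1} is stated for this $d$, and the formula $\mathcal{E}=(\frac{N-1}{4}-u)d+g-v$ needs $d\equiv 1\pmod{4}$. Its claim $\Omega_q^2(\mathcal{T})\equiv 0\pmod{[m]}$ fails for this very example.

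\textbf{For $d\equiv 1\pmod{4}$, the step you call the main obstacle is exactly the missing idea.} Your $q$-Lucas factorization gives $0/0$ at $\zeta$. What you need is Lemma \ref{P6 lemma 1}: $H_{ad+b}(\zeta)=J_aH_b(\zeta)$ for $b\le g$, and $0$ for $g<b<d$. This forces every surviving term to have $\langle n\rangle_d\in\Psi_v$, that is $v\le r\le g-v$, or $e\le r\le g-e$ when $v=d-e$. On that set, $n-s$, $n$, $n+s$ lie in fixed blocks and never straddle a boundary. So each summand is a product of $J$-constants and a one-period term.

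The paper kills each period sum by the reflection $r\mapsto g-r$ (Lemma \ref{P6 lemma 2}), which multiplies the term by $-1$. Your alternative also works once this support statement is in hand. The period sum equals $\sum_{n=v}^{g}\Xi_\zeta(n,v)$, since the terms with $n>g-v$ vanish. Theorem \ref{WangXu Thm} with $(m,s)\to(d,v)$ (resp.\ $(d,e)$) and $t=1$ then gives $0$, because its right side contains $1-q^d$ (or $[d]$ if $v=0$) and no $\Phi_d(q)$ in any denominator.

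You still need the final truncated block. The paper handles it by checking that the block ends at residue $R=g-v$ (resp.\ $g+e$), so it contains all of $\aleph_v$.

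Your right-hand-side paragraph remains only a plan. The paper relies on
$[m+4s](q;q^4)_{\frac{m-1}{4}+s}=[m](q;q^4)_{\frac{m-1}{4}}(q^{m+4};q^4)_s$.
You are right that the denominators, including possible poles of the inner sum, must be checked as well; the example above shows that they can absorb the factor $\Phi_d(q)$.
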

\section{Key lemmas } \label{Sect 2}
In this section, we establish several auxiliary lemmas that will be used in the proofs of our main results. First we define
\begin{align*}
    \Theta_q(n,s) = [8n+1]
\frac{(q;q^4)_{n-s}(q;q^4)_{n+s}(q;q^4)_n^{2}}
{(q^4;q^4)_{n-s}(q^4;q^4)_{n+s}(q^4;q^4)_n^{2}}
q^{2n},
\end{align*}
\begin{align*}
    \Xi_q(n,s)= [8n+1]
\frac{(q;q^4)_{n-s}(q;q^4)_{n+s}(q;q^4)_n^{4}}
{(q^4;q^4)_{n-s}(q^4;q^4)_{n+s}(q^4;q^4)_n^{4}}
q^{5n}.
\end{align*}
Furthermore, set
\begin{align*}
    \Omega_q^1(\mathcal{T}) = \sum_{n=s}^{\mathcal{T}} \Theta_q(n,s),
\end{align*}
and 
\begin{align*}
    \Omega_q^2(\mathcal{T}) = \sum_{n=s}^{\mathcal{T}} \Xi_q(n,s),
\end{align*}
where $\mathcal{T}= (m-1)/4$. 

\par \quad Throughout the paper, let $d=4g+1>1$ be an odd integer, and let $\xi$ denote a primitive $d$-th root of unity. For $M\ge0$, define
\begin{equation*}
  H_M(q)=\frac{(q;q^4)_M}{(q^4;q^4)_M},
\end{equation*}
and
\begin{equation*}
  J_a=\prod_{j=0}^{a-1}\frac{4j+1}{4j+4}=\frac{(1/4)_a}{(1)_a}, \qquad a\ge 0.
\end{equation*}

\begin{lemma} \label{P6 lemma 1}
    Let $M=ad+b$, where $a\ge0$ and $0\le b<d$, then  
\begin{equation}\label{P6 lemma 1 eq}
H_M(\xi)=
\begin{cases}
J_a H_b(\xi),&0\le b\le g,\\
0,& g<b<d,
\end{cases}
\end{equation}
when $q\to\xi$. In particular, this has a finite limit at $\xi$.
\end{lemma}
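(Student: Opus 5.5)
We split the product $H_M(q)=\prod_{k=0}^{M-1}\frac{1-q^{4k+1}}{1-q^{4k+4}}$ into consecutive blocks of length $d$, namely $k=jd,\dots,jd+d-1$ for $0\le j<a$, followed by a tail block $k=ad,\dots,ad+b-1$. The key observation is that $\xi^{4(jd+i)+1}=\xi^{4i+1}$ and $\xi^{4(jd+i)+4}=\xi^{4i+4}$. Hence each factor evaluated at $q=\xi$ depends only on $i=k \bmod d$, except for those factors whose numerator or denominator vanishes at $\xi$. Since $\gcd(4,d)=1$, these occur exactly when $4i+1\equiv 0$ or $4i+4\equiv0 \pmod d$.

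The vanishing indices can be located explicitly. With $d=4g+1$ and $0\le i<d$, the condition $4i+1\equiv0\pmod d$ forces $i=g$, since $4g+1=d$. The condition $4i+4\equiv 0$ forces $i=d-1$. So within each full block there is exactly one vanishing numerator factor, at $i=g$, and one vanishing denominator factor, at $i=d-1$. The rest of the block is $\prod_{i\ne g,\,d-1}\frac{1-\xi^{4i+1}}{1-\xi^{4i+4}}$. Here the exponents $4i+1$ with $i\neq g$ and $4i+4$ with $i\ne d-1$ run over the same set of nonzero residues mod $d$, because both $\{4i+1\}$ and $\{4i+4\}$ are complete residue systems. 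This ratio therefore equals $1$.

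For the singular pair in block $j$ we take the limit $q\to\xi$ and use $\lim_{q\to\xi}\frac{1-q^{A}}{1-q^{B}}=\frac{A}{B}$ when $d\mid A$ and $d\mid B$, by L'Hôpital or by writing $1-q^{dN}=(1-q^d)[N]_{q^d}$. With $A=4(jd+g)+1=(4j+1)d$ and $B=4(jd+d-1)+4=(4j+4)d$, the pair contributes $\frac{4j+1}{4j+4}$. Multiplying over $j<a$ gives $J_a$.

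For the tail $k=ad+i$ with $0\le i<b$, no denominator factor vanishes, since $i\le b-1<d-1$. The product reduces to $\prod_{i<b}\frac{1-\xi^{4i+1}}{1-\xi^{4i+4}}=H_b(\xi)$ provided $b\le g$, because then no numerator vanishes either. If $g<b<d$, the tail contains $i=g$, whose numerator $1-q^{(4a+1)d}$ vanishes at $\xi$ while no remaining denominator does, so the limit is $0$. The only point needing care is the justification that the full-block singular factors must be paired and treated as a limit rather than evaluated directly. Writing $H_M(q)$ as a rational function, the multiplicity of $\Phi_d(q)$ in the numerator is at least that in the denominator, so the limit is finite; this gives the final claim of the lemma.
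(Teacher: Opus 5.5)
Your proof is correct and follows the same route as the paper. You split $H_M$ into full blocks of length $d$ and isolate the unique vanishing numerator factor at $k=jd+g$ and the unique vanishing denominator factor at $k=jd+d-1$, whose ratio tends to $(4j+1)/(4j+4)$; the remaining factors cancel because both exponent sets run over the nonzero residues mod $d$, and the tail is handled according to whether it reaches $k=ad+g$, with your explicit justifications (complete residue systems via $\gcd(4,d)=1$, and $1-q^{dN}=(1-q^d)[N]_{q^d}$ for the limit) filling in details the paper leaves implicit.
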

\begin{proof}
The product that defines $H_M$ may be seen in a whole block of length $d$, represented as $i=jd,jd+1,\ldots,jd+d-1$. Since $4 (jd+g) + 1 = (4j+1)d,$ the unique numerator zero happens at $i=jd+g.$ Similarly, since $4(jd+d-1) + 4 = (4j+4)d,$ the unique zero factor in the denominator occurs at $i=jd+d-1$. Consequently, the ratio of the corresponding zero factors is
\begin{equation*} 
\lim_{q\to\xi}\frac{1-q^{(4j+1)d}}{1-q^{(4j+4)d}} = \frac{4j+1}{4j+4}.
\end{equation*}
  The exponents in the remaining numerator and denominator would run through the same non-zero residue classes modulo $d$ if these two zero factors were eliminated. This implies that the remaining products would cancel at $q=\xi$. Consequently, the contribution of the $j$-th full block is $(4j+1)/(4j+4)$. 
\par\quad The remaining incomplete block has length $b.$\\
\textbf{Case I:} When $0 \le b \le g,$ it contains no zero factors, and therefore behaves precisely as $H_b(\xi).$ \\
\textbf{Case II:} When $g < b < d,$ it contains a zero factor in the numerator, whereas the denominator has no corresponding zero factor. Consequently, the entire product vanishes. This completes the proof.
\end{proof}
The next lemma establishes a reflection identity for $ H_M$.

\begin{lemma} \label{P6 lemma 2}
    For $0\le x\le g$, we have
\begin{equation}\label{P6 lemma 2 eq}
  H_{g-x}(\xi)=(-1)^g\xi^{2g^2-g+3x}H_x(\xi).
\end{equation}
Moreover, for $0\le r\le g$,
\begin{equation*}
  [8(g-r)+1]_{\xi}=-\xi^{-(8r+1)}[8r+1]_{\xi}.
\end{equation*}
\end{lemma}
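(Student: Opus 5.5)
The plan is to work directly with the finite products at $q=\xi$, using only $\xi^d=1$ with $d=4g+1$. For $0\le M\le g$ no factor of $H_M(\xi)$ vanishes, since $4k+1$ and $4k+4$ with $0\le k\le g-1$ are never divisible by $d$. So all manipulations below are legitimate identities of nonzero complex numbers. The strategy is to reflect each factor via $k\mapsto g-k$, reducing $H_{g-x}(\xi)$ to $H_x(\xi)/H_g(\xi)$ times a power of $\xi$, and then to evaluate $H_g(\xi)$ in closed form.

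\textbf{Step 1 (reflection).} For $0\le k\le g-x-1$, note that $4k+1-d=-4(g-k)$ and $4k+4-d=-(4(g-k-1)+1)$. Hence
\begin{align*}
1-\xi^{4k+1}&=-\xi^{-4(g-k)}\bigl(1-\xi^{4(g-k)}\bigr),\\
1-\xi^{4k+4}&=-\xi^{-(4(g-k)-3)}\bigl(1-\xi^{4(g-k)-3}\bigr),
\end{align*}
so each ratio equals $\xi^{-3}\,(1-\xi^{4j})/(1-\xi^{4j-3})$ with $j=g-k$. As $k$ runs over $0,\dots,g-x-1$, the index $j$ runs over $x+1,\dots,g$. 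The product $\prod_{j=x+1}^{g}(1-\xi^{4j-3})/(1-\xi^{4j})$ is exactly $H_g(\xi)/H_x(\xi)$. This gives
$$H_{g-x}(\xi)=\xi^{-3(g-x)}\,\frac{H_x(\xi)}{H_g(\xi)}.$$

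\textbf{Step 2 (evaluating $H_g(\xi)$).} Taking $x=0$ in Step 1 only yields $H_g(\xi)^2=\xi^{-3g}$, which leaves the sign undetermined. I expect fixing this sign to be the one genuine point of the proof. To resolve it, I will compute $H_g(\xi)$ directly. Write each denominator factor as
$$1-\xi^{4j}=-\xi^{4j}\bigl(1-\xi^{-4j}\bigr),\qquad 1\le j\le g.$$
Since $-4j\equiv 4(g-j)+1 \pmod d$, the exponents $-4j$ run exactly over the numerator exponents $1,5,\dots,4g-3$. Therefore the denominator equals $(-1)^g\xi^{2g(g+1)}$ times the numerator, and
$$H_g(\xi)=(-1)^g\xi^{-2g^2-2g}.$$
This is consistent with Step 2's first sentence, because $4g^2\equiv -g\pmod d$.

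\textbf{Step 3 (combine).} Substituting into Step 1 gives
$$H_{g-x}(\xi)=(-1)^g\,\xi^{2g^2+2g-3g+3x}H_x(\xi)=(-1)^g\,\xi^{2g^2-g+3x}H_x(\xi),$$
which is \eqref{P6 lemma 2 eq}.

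\textbf{Second claim.} For the $q$-integer identity, use $8g+2=2d$, so that $8(g-r)+1\equiv-(8r+1)\pmod d$. Then
$$[8(g-r)+1]_\xi=\frac{1-\xi^{-(8r+1)}}{1-\xi}=\frac{-\xi^{-(8r+1)}\bigl(1-\xi^{8r+1}\bigr)}{1-\xi}=-\xi^{-(8r+1)}[8r+1]_\xi,$$
which is the stated formula.
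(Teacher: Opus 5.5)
Your proof is correct, and it reaches the identity by a somewhat different organization than the paper. The paper introduces an auxiliary parameter $a$, sets $Q_x(a,q)=(aq;q^4)_x/(q^4/a;q^4)_x$, and computes the quotient $Q_{g-x}/Q_x=\prod_{j=x}^{g-x-1}(1-aq^{4j+1})/(1-q^{4j+4}/a)$ directly. Reindexing the denominator by $j\mapsto g-1-j$ turns its factors into $1-q^{-(4j+1)}/a$ modulo $\Phi_d(q)$, so each ratio collapses to $-aq^{4j+1}$. The sign $(-1)^{g-2x}=(-1)^g$ then comes simply from counting the $g-2x$ factors. Because the range $[x,g-x-1]$ only makes sense for $x\le g/2$, the paper needs a second case, obtained by swapping $x$ and $g-x$ and taking reciprocals.

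You instead reflect every factor of $H_{g-x}(\xi)$ at once. This works uniformly in $x$ and gives $H_{g-x}(\xi)=\xi^{-3(g-x)}H_x(\xi)/H_g(\xi)$, at the cost of having to evaluate $H_g(\xi)$ separately. You correctly noticed that $x=0$ only determines $H_g(\xi)^2$, and you fixed the sign by the direct computation $H_g(\xi)=(-1)^g\xi^{-2g(g+1)}$. This agrees with the paper's Case I at $x=0$, since $2g^2-g+2g^2+2g=gd$.

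So your route trades the paper's case split for one extra closed-form evaluation, and it works with honest complex numbers at $q=\xi$ rather than congruences modulo $\Phi_d(q)$. The paper's parametrized version yields the slightly more general congruence with the factor $(-a)^{g-2x}$, but that generality is unused, since only $a=1$ is needed. The second claim about $[8(g-r)+1]_\xi$ is handled identically in both.
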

\begin{proof}
  Since none of the denominator factors in $H_x(\xi)$ and $H_{g-x}(\xi)$ vanishes, we define

$$Q_x(a,q):=\frac{(aq;q^4)_x}{(q^4/a;q^4)_x}.$$

\textbf{Claim :} 
\begin{equation}\label{P5 P-reflection}
  Q_{g-x}(a,q)\equiv
  (-a)^{g-2x}q^{2g^2-g+3x}Q_x(a,q)
  \pmod{\Phi_d(q)}.
\end{equation}
\textbf{Case I :}
When $0\leq x\leq g/2$, the denominator factors can, modulo $\Phi_d(q)$, be reversed by replacing $j$ with $g-1-j$:

\begin{align*}
\frac{Q_{g-x}(a,q)}{Q_x(a,q)}
&=\prod_{j=x}^{g-x-1}\frac{1-aq^{4j+1}}{1-q^{4j+4}/a}  \\
&\equiv
  \prod_{j=x}^{g-x-1}\frac{1-aq^{4j+1}}{1-q^{-(4j+1)}/a} \\
&=\prod_{j=x}^{g-x-1}(-a q^{4j+1})                         \\
&=(-a)^{g-2x}q^{2g^2-g-4xg+2x}
 \equiv (-a)^{g-2x}q^{2g^2-g+3x},
\end{align*}
because $(2g^2-g-4xg+2x) - (2g^2-g+3x) = -xd.$ \\
\textbf{Case II :} 
When $x>g/2$, we replace $x$ by $g-x$. Then, by \textbf{Case I}, we obtain

\begin{align*}
    \frac{Q_{x}(a,q)}{Q_{g-x}(a,q)} & \equiv (-a)^{2x-g}q^{2g^2+2g-3x} \pmod{\Phi_d(q)}.
\end{align*}
Taking reciprocals, we obtain
\begin{align*}
    \frac{Q_{g-x}(a,q)}{Q_{x}(a,q)} & \equiv (-a)^{g-2x}q^{-(2g^2+2g-3x)} \pmod{\Phi_d(q)} \\
    & \equiv (-a)^{g-2x}q^{2g^2-g+3x} \pmod{\Phi_d(q)},
\end{align*}
because $(2g^2+2g-3x) + (2g^2-g+3x)=gd.$ This proves the claim in \eqref{P5 P-reflection}. Setting $a=1$ and $q=\xi$ then completes the proof of \eqref{P6 lemma 2 eq}.
 Since $8(g-r)+1=2d-(8r+1),$ we obtain
\begin{equation*}
[8(g-r)+1]_{\xi}
=\frac{1-\xi^{-(8r+1)}}{1-\xi}
=-\xi^{-(8r+1)}[8r+1]_{\xi}.
\end{equation*}
This completes the proof of the lemma.
\end{proof}

\begin{lemma} \cite[Lemma $3.3$]{liu2026proofs} \label{P6 lemma 3}
    Let $d>1$, and let $R(q)=A(q)/B(q)\in\Q(q)$, where $A,B\in\Q[q]$ are coprime. If $R(q)$ has a finite limit at every primitive $d$-th root of unity, then $\Phi_d(q)\nmid B(q)$.  If, in addition, $R(\xi)=0$ for every primitive $d$-th root of unity $\xi$, then $\Phi_d(q)\mid A(q)$.
\end{lemma}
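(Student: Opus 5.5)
The plan is to argue entirely through the irreducibility of $\Phi_d(q)$ over $\Q$: $\Phi_d(q)$ is the minimal polynomial over $\Q$ of each primitive $d$-th root of unity $\xi$. The only consequence we need is the following standard fact. For any $P\in\Q[q]$ and any primitive $d$-th root of unity $\xi$,
\begin{equation*}
P(\xi)=0 \iff \Phi_d(q)\mid P(q).
\end{equation*}
The direction ``$\Leftarrow$'' is trivial. For ``$\Rightarrow$'', note that $\gcd(P,\Phi_d)$ in $\Q[q]$ is a nonconstant divisor of the irreducible polynomial $\Phi_d$, since it vanishes at $\xi$; hence it equals $\Phi_d$ up to a unit.

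For the first assertion, I will argue by contradiction. Suppose $\Phi_d(q)\mid B(q)$. Since $\gcd(A,B)=1$ in $\Q[q]$ and $\Phi_d$ is nonconstant, $\Phi_d\nmid A$. By the fact above, $A(\xi)\neq 0$ for every primitive $d$-th root of unity $\xi$, while $B(\xi)=0$. Then, as $q\to\xi$, we have $|R(q)|=|A(q)|/|B(q)|\to\infty$, because the numerator tends to the nonzero number $A(\xi)$ and the denominator tends to $0$. This contradicts the hypothesis that $R$ has a finite limit at $\xi$. Therefore $\Phi_d\nmid B$. Applying the fact once more, $B(\xi)\neq0$ for every primitive $d$-th root of unity $\xi$.

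For the second assertion, $B(\xi)\ne 0$ by the first part, so $R$ is continuous at $\xi$ and its limit there is $A(\xi)/B(\xi)$. The hypothesis $R(\xi)=0$ therefore gives $A(\xi)=0$, and the fact above yields $\Phi_d(q)\mid A(q)$. The same argument shows that vanishing at a single primitive root already suffices.

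There is no real obstacle here. The only point requiring care is the standard input that $\Phi_d$ is irreducible over $\Q$, which makes ``vanishes at one primitive root'' equivalent to ``divisible by $\Phi_d$''. I will simply invoke this classical theorem rather than reprove it.
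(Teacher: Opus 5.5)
Your proof is correct and complete. Irreducibility of $\Phi_d(q)$ over $\Q$ turns vanishing at a primitive $d$-th root of unity into divisibility by $\Phi_d(q)$. Coprimality of $A$ and $B$ then forces $A(\xi)\neq 0$ whenever $B(\xi)=0$, so $R$ would have a pole at $\xi$; and once $B(\xi)\neq 0$, the limit is simply $A(\xi)/B(\xi)$.

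The paper gives no proof of its own here; it imports the lemma from Liu and Tao, so there is no argument in the paper to compare with, and yours is the standard self-contained one. Your remark that a single primitive root suffices is a correct mild strengthening, and by Galois conjugacy it applies to the finite-limit hypothesis as well as to the vanishing hypothesis.
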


Throughout the paper, let $d=4g+1>1$ be a fixed divisor of $m$. We express $s$ in the form
\begin{equation}\label{P5 eq 's'}
s=ud+v,
\qquad 0\le v<d.
\end{equation}

For an integer $y$, let $\langle y\rangle_d$ denote the least non-negative residue of $y$ modulo $d$. By Lemma \ref{P6 lemma 1}, $H_M(\xi)$ vanishes whenever $\langle M\rangle_d>g$. Consequently, we have
\begin{equation}\label{P6 T-nonzero-condition 1}
\Theta_{\xi}(n,s)=0,
\end{equation}
and
\begin{equation}\label{P6 T-nonzero-condition 2}
\Xi_{\xi}(n,s)=0,
\end{equation}
unless
\begin{equation}\label{P5 three-residue-conditions}
\langle n-s\rangle_d\leq g,
\qquad
\langle n\rangle_d\leq g,
\qquad
\langle n+s\rangle_d\leq g.
\end{equation}

Now we set $r=\langle n\rangle_d$, then, since $s\equiv v\pmod d$, we have from the above conditions \eqref{P5 three-residue-conditions}, 
\begin{equation}\label{P5 r-residue-conditions}
  \langle r-v\rangle_d\leq g,\qquad
  r\leq g,\qquad
  \langle r+v\rangle_d\leq g .
\end{equation}
We claim that the above conditions are equivalent to $r\in \Psi_v$, where
\begin{equation}\label{P5 Rb-def}
\Psi_v=
\begin{cases}
\{v,v+1,\ldots,g-v\},&
0\le v\le \lfloor g/2\rfloor,\\[2mm]
\{e,e+1,\ldots,g-e\},&
e=d-v,\ 1\le e\le \lfloor g/2\rfloor,\\[2mm]
\varnothing,&\text{otherwise}.
\end{cases}
\end{equation}
To prove the above claim, we consider the following two cases.\\
\textbf{Case A :}
When $0\leq v\leq g$, the middle condition in \eqref{P5 r-residue-conditions} yields $0\leq r\leq g$. If $r\geq v$, then clearly $\langle r-v\rangle_d\leq g$. On the other hand, if $r<v$, then
$$
\langle r-v\rangle_d=d+r-v\geq d-v=3g+1+(g-v)>g,
$$
which contradicts \eqref{P5 r-residue-conditions}. Therefore, we must have $r\geq v$, and hence $\langle r-v\rangle_d =r-v\leq g$.
Similarly, we obtain $r\leq g-v$. Indeed, since $0\leq r+v\leq 2g<4g+1=d$, the least non-negative residue of $r+v$ modulo $d$ is simply $r+v$. Therefore,
$$
\langle r+v\rangle_d\leq g
\quad\Longleftrightarrow\quad
r+v\leq g
\quad\Longleftrightarrow\quad
r\leq g-v.
$$
Combining this with $r\geq v$, we obtain
$$
v\leq r\leq g-v.
$$
Thus, the three conditions in \eqref{P5 r-residue-conditions} are equivalent to $v\leq r\leq g-v$. This gives the first line of \eqref{P5 Rb-def}. In particular, if $v>\lfloor g/2\rfloor$, the corresponding interval is empty.\\
\textbf{Case B :}
When $v>g$, write $v=d-e$, where $1\leq e\leq g$. Then $v\equiv -e\pmod d$. Hence, the conditions in \eqref{P5 r-residue-conditions} become
$$
\langle r+e\rangle_d\leq g,\qquad
r\leq g,\qquad
\langle r-e\rangle_d\leq g.
$$

Following \textbf{Case A}, we have $0\leq r+e\leq 2g<4g+1=d$. Thus, the first condition is equivalent to
$$
r+e\leq g,
$$
or, equivalently, $r\leq g-e$. Similarly, the last condition is equivalent to $r\geq e$. Indeed, if $r<e$, then
$$
\langle r-e\rangle_d=d+r-e>g,
$$
which contradicts the last condition. On the other hand, if $r\geq e$, then
$$
\langle r-e\rangle_d=r-e\leq g.
$$
Therefore, we obtain
$$
e\leq r\leq g-e,
$$
which is equivalent to the above three conditions. This gives the second line of \eqref{P5 Rb-def}. In particular, if $e>\lfloor g/2\rfloor$, the corresponding interval is empty.

The two cases above exhaust all possibilities for $0\leq v<d$, and hence the claim follows.

\begin{lemma} \label{P6 lemma 4}
    For every $\ell\ge0$, we have
\begin{equation}\label{P6 lemma 4 eq1}
  \sum_{j=0}^{d-1} \Theta_{\xi}(s+\ell d + j,s)=0,
\end{equation}
and
\begin{equation}\label{P6 lemma 4 eq2}
  \sum_{j=0}^{d-1} \Xi_{\xi}(s+\ell d + j,s)=0.
\end{equation}
\end{lemma}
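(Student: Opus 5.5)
The plan is to evaluate each summand at $q=\xi$ with Lemma \ref{P6 lemma 1}, show that on the window $n=s+\ell d+j$ ($0\le j<d$) all the $J$-factors are the same for every nonvanishing term, and then pair the term of residue $r$ with the term of residue $g-r$ using Lemma \ref{P6 lemma 2}. The two terms in each pair will cancel.

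\textbf{Reduction to residues.} Write $s=ud+v$ as in \eqref{P5 eq 's'}. As $j$ runs over $0,\dots,d-1$, the residue $r=\langle n\rangle_d$ runs over all residues modulo $d$. By \eqref{P6 T-nonzero-condition 1} and \eqref{P6 T-nonzero-condition 2}, only $r\in\Psi_v$ contributes.

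\emph{Case A} ($0\le v\le\lfloor g/2\rfloor$). Here $j=r-v$, and we can read off the quotients directly:
\begin{itemize}[label={}]
\item $n-s=\ell d+(r-v)$, with quotient $\ell$;
\item $n=(u+\ell)d+r$, with quotient $u+\ell$;
\item $n+s=(2u+\ell)d+(r+v)$, with $r+v\le g$, so the quotient is $2u+\ell$.
\end{itemize}

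\emph{Case B} ($v=d-e$, $1\le e\le\lfloor g/2\rfloor$). Now $j=r+e$. Every quotient shifts by a constant amount, and the residues of $n-s$ and $n+s$ become $r+e$ and $r-e$, both lying in $[0,g]$.

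In both cases Lemma \ref{P6 lemma 1} gives $H_{n-s}(\xi)H_{n+s}(\xi)H_n(\xi)^k=C\,H_{r-w}(\xi)H_{r+w}(\xi)H_r(\xi)^k$. Here $w\in\{v,e\}$, $k\in\{2,4\}$, and $C$ is a product of $J$'s independent of $r$; for instance $C=J_\ell J_{u+\ell}^{k}J_{2u+\ell}$ in Case A. Moreover $[8n+1]_\xi=[8r+1]_\xi$ and $\xi^{cn}=\xi^{cr}$. Hence \eqref{P6 lemma 4 eq1} reduces to showing
\[
S=\sum_{r\in\Psi}[8r+1]_\xi\,\xi^{2r}H_{r-w}(\xi)H_{r+w}(\xi)H_r(\xi)^2=0,
\]
and \eqref{P6 lemma 4 eq2} reduces to the same sum with $\xi^{5r}$ and $H_r(\xi)^4$ in place of $\xi^{2r}$ and $H_r(\xi)^2$.

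\textbf{Reflection.} The index set $\Psi=\{w,\dots,g-w\}$ is invariant under $r\mapsto g-r$. This map sends $r-w\mapsto g-(r+w)$, $r+w\mapsto g-(r-w)$ and $r\mapsto g-r$, so it permutes the three $H$-arguments up to reflection. Applying \eqref{P6 lemma 2 eq} to each $H$ factor, the signs $(-1)^g$ occur an even number of times and cancel. Set $A=2g^2-g$. The powers of $\xi$ collect as follows.
\begin{itemize}[label={}]
\item For $k=2$: the $H$ factors contribute $\xi^{4A+12r}$.
\item For $k=4$: the $H$ factors contribute $\xi^{6A+18r}$.
\item In both cases the second part of Lemma \ref{P6 lemma 2} supplies $-\xi^{-(8r+1)}$, and the weight supplies $\xi^{2(g-r)}$ or $\xi^{5(g-r)}$ respectively.
\end{itemize}
The total exponents are
\[
8g^2-2g-1+2r=(4g+1)(2g-1)+2r, \qquad 12g^2-g-1+5r=(4g+1)(3g-1)+5r .
\]
These are congruent to $2r$ and $5r$ modulo $d$, respectively. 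Therefore the term at $g-r$ equals minus the term at $r$, and summing over the symmetric set $\Psi$ gives $S=0$. At the fixed point $r=g/2$ (when $g$ is even) the term equals its own negative, so it vanishes.

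\textbf{Main obstacle.} The delicate step is the bookkeeping in Case B and at the ends of the window. One must check that the quotients of $n-s$, $n$ and $n+s$ really are constant over the nonvanishing $r$, so that a single constant $C$ factors out of the sum. I would verify this separately for $v\le\lfloor g/2\rfloor$ and for $v=d-e$, using $j=r-v$ and $j=r+e$ respectively, exactly as above. The exponent arithmetic in the reflection step is routine once Lemma \ref{P6 lemma 2} is in hand.
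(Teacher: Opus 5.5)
Your proposal is correct and follows essentially the same route as the paper: you factor out the common $J$-constants of Lemma~\ref{P6 lemma 1} on each window, pair $r$ with $g-r$ in $\Psi_v$ using Lemma~\ref{P6 lemma 2}, and get the factor $-1$ from the same exponents $(2g-1)d$ and $(3g-1)d$. The only cosmetic difference is the fixed point $r=g/2$: the paper kills it with $[8r+1]_\xi=[d]_\xi=0$, while you get it from $T(g/2)=-T(g/2)$, and both arguments are valid.
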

\begin{proof}
If $\Psi_v=\varnothing$, then there is nothing to prove, since every term in the corresponding block vanishes by \eqref{P6 T-nonzero-condition 1} and \eqref{P6 T-nonzero-condition 2}.\\
Firstly, suppose that $0\leq v\leq\lfloor g/2\rfloor$. Then the possible non-zero terms in the block are of the form
\begin{equation*}
  n=(u+\ell)d+r,
  \qquad v\leq r\leq g-v.
\end{equation*}

Consider the reflection $r\mapsto r^{\prime}=g-r$, which preserves the interval $[v,g-v]$. If $r=r^{\prime}$, then $2r=g$. Consequently, $8r+1=d$, and hence $[8r+1]_{\xi}=0$. Thus, the corresponding summand vanishes.

Now suppose that $r\neq r^{\prime}$. We compare the term corresponding to $r$ with its reflected counterpart. For $n=(u+\ell)d+r$, the relevant indices are
\begin{equation*}
  n-s=\ell d+(r-v),
  \qquad
  n+s=(2u+\ell)d+(r+v),
  \qquad
  n=(u+\ell)d+r.
\end{equation*}
Again, under the reflection $r\mapsto r^{\prime}=g-r$, we have
\begin{equation*}
  r^{\prime}-v=g-(r+v),
  \qquad
  r^{\prime}+v=g-(r-v).
\end{equation*}
Therefore, when taking the quotient of the reflected term by the original term, the complete-period constants arising from Lemma~\ref{P6 lemma 1} cancel.
The remaining  factors are obtained from Lemma~\ref{P6 lemma 2}:
\begin{align*}
    \frac{H_{r^{\prime}-v}(\xi)H_{r^{\prime}+v}(\xi)H_{r^{\prime}}(\xi)^2}
     {H_{r-v}(\xi)H_{r+v}(\xi) H_r(\xi)^2}
=\xi^{8g^2-4g+12r},
\end{align*}
\begin{align*}
    \frac{H_{r^{\prime}-v}(\xi)H_{r^{\prime}+v}(\xi)H_{r^{\prime}}(\xi)^4}
     {H_{r-v}(\xi)H_{r+v}(\xi) H_r(\xi)^4}
=\xi^{12g^2-6g+18r},
\end{align*}
and
\begin{align*}
\frac{\xi^{2r^{\prime}}}{\xi^{2r}}=\xi^{2g-4r}, \qquad \frac{\xi^{5r^{\prime}}}{\xi^{5r}}=\xi^{5g-10r},
\qquad
\frac{[8r^{\prime}+1]_{\xi}}{[8r+1]_{\xi}}
=-\xi^{-(8r+1)}.
\end{align*}
We have
\begin{align} \label{P6 block-ratio 1}
\frac{\Theta_{\xi}((u+\ell)d+r^{\prime},s)}
     {\Theta_{\xi}((u+\ell)d+r,s)}
&= \xi^{8g^2-4g+12r} \xi^{2g-4r} \left( -\xi^{-(8r+1)} \right)  \notag\\
&=-\xi^{8g^2-2g-1}=-1,
\end{align}
and
\begin{align} \label{P6 block-ratio 2}
\frac{\Xi_{\xi}((u+\ell)d+r^{\prime},s)}
     {\Xi_{\xi}((u+\ell)d+r,s)}
&= \xi^{12g^2-6g+18r} \xi^{5g-10r} \left( -\xi^{-(8r+1)} \right)  \notag\\
&=-\xi^{12g^2-g-1}=-1,
\end{align}
because $$ 8g^2-2g-1 = (2g-1)(4g+1)= (2g-1)d,$$ and $$ 12g^2-g-1=(3g-1)(4g+1)=(3g-1)d.$$
Thus, all non-zero terms cancel in pairs, while any fixed point contributes zero.
Secondly, suppose that $v=d-e$, where $1\leq e\leq\lfloor g/2\rfloor$. Then the potentially non-zero terms in the block are of the form
\begin{equation*}
  n=(u+\ell+1)d+r,
  \qquad e\leq r\leq g-e.
\end{equation*}
Again, the reflection $r^{\prime}=g-r$ preserves this interval, and any fixed point contributes zero. For $r\neq r^{\prime}$, the relevant indices are
\begin{equation*}
  n-s=\ell d+(r+e),
  \qquad
  n+s=(2u+\ell+2)d+(r-e),
  \qquad
  n=(u+\ell+1)d+r.
\end{equation*}
Under the reflection $r\mapsto r^{\prime}$, we have
\begin{equation*}
  r^{\prime}-e=g-(r+e),
  \qquad
  r^{\prime}+e=g-(r-e).
\end{equation*}
The complete-period constants again cancel, and the remaining factors are given by the same calculation as in \eqref{P6 block-ratio 1} and \eqref{P6 block-ratio 2}. Consequently,
\begin{equation*}
  \Theta_\xi((u+\ell+1)d+r^{\prime},s)
  =-\Theta_\xi((u+\ell+1)d+r,s),
\end{equation*}
and
\begin{equation*}
  \Xi_\xi((u+\ell+1)d+r^{\prime},s)
  =-\Xi_\xi((u+\ell+1)d+r,s).
\end{equation*}
Thus, all non-zero terms cancel in pairs, and the proof is complete.
\end{proof}

\section{ Proofs of main results  } \label{Section 3}

We have
\begin{align*}
\Omega_q^1(\mathcal{T}) &= \sum_{n=s}^{\mathcal{T}} \Theta_q(n,s),\\
\Omega_q^2(\mathcal{T}) &= \sum_{n=s}^{\mathcal{T}} \Xi_q(n,s).
\end{align*}
\textbf{Claim :}
\begin{equation} \label{P6 claim}
    \Omega_q^1(\mathcal{T})\equiv 0\pmod{[m]}
    \qquad\text{and}\qquad
    \Omega_q^2(\mathcal{T})\equiv 0\pmod{[m]}.
\end{equation}
To prove \eqref{P6 claim}, it is enough to show that

$$
\Omega_{\xi}^1(\mathcal{T})=0
\qquad\text{and}\qquad
\Omega_{\xi}^2(\mathcal{T})=0,
$$
for every primitive $d$-th root of unity $\xi$, where $d\mid m$ and $d>1$. By Lemma~\ref{P6 lemma 1}, each summand admits a finite limit at $\xi$. Furthermore, Lemma~\ref{P6 lemma 3} guarantees that the reduced denominators of $\Omega_q^1(\mathcal{T})$ and $\Omega_q^2(\mathcal{T})$ are not divisible by $\Phi_d(q)$. Consequently, if both $\Omega_{\xi}^1(\mathcal{T})$ and $\Omega_{\xi}^2(\mathcal{T})$ vanish for every primitive $d$-th root of unity $\xi$, then the reduced numerators of $\Omega_q^1(\mathcal{T})$ and $\Omega_q^2(\mathcal{T})$ must be divisible by $\Phi_d(q)$.

Since $d\mid m$, there exists an integer $N$ such that
\begin{equation*}
m=Nd.
\end{equation*}
We retain the notation $s=ud+v$, with $0\leq v<d$, introduced in \eqref{P5 eq 's'}. Put
\begin{equation*}
\mathcal{E}=\mathcal{T}-s.
\end{equation*}
Then
\begin{align*}
\Omega_\xi^1(\mathcal{T})
&=\sum_{j=0}^{\mathcal{E}}\Theta_\xi(s+j,s),\\
\Omega_\xi^2(\mathcal{T})
&=\sum_{j=0}^{\mathcal{E}}\Xi_\xi(s+j,s).
\end{align*}
Furthermore, by Lemma~\ref{P6 lemma 4}, the contribution from every complete block of $d$ consecutive values of $j$ is zero. Writing
\begin{equation*}
\mathcal{E}=Fd+R,\qquad 0\leq R<d,
\end{equation*}
we see that all complete blocks make no contribution. Hence, it remains only to consider the final incomplete block, namely

$$
    0\leq j-Fd\leq R.
$$
Thus, the evaluation of $\Omega_\xi^1(\mathcal{T})$ and $\Omega_\xi^2(\mathcal{T})$ reduces to the analysis of this final block.
We now characterize the non-vanishing condition in terms of the residue class of the summation shift $j$. For a fixed block, write
$$j=\ell d+b,\qquad 0\le b<d.$$
Since $n=s+j$ and $s\equiv v\pmod d$, it follows that
$$ n\equiv v+b\pmod d.$$
Thus, if $r=\langle n\rangle_d$, then
$$    r\equiv v+b\pmod d,$$
or, equivalently,
$$    b=\langle r-v\rangle_d.$$
Consequently, a residue class $b$ of $j$ can contribute a non-zero summand only when $r\in\Psi_v$. In that case, the corresponding residue class is uniquely determined by
$$    b=\langle r-v\rangle_d.$$

Let $\aleph_v$ denote this set of possible residues $b=\langle j\rangle_d$.
This set is obtained from $\Psi_v$ by subtracting $v$ modulo $d$. In particular, if $0\leq v\leq\lfloor g/2\rfloor$, then
$$
  r=v+a,\qquad 0\le a\le g-2v,
$$
and hence $b=\langle r-v\rangle_d=a$.  If $v=d-e$ with
$1\leq e\leq\lfloor g/2\rfloor$, then
$$
  r\in\{e,e+1,\ldots,g-e\}
$$
and
$$
  b=\langle r-v\rangle_d=\langle r+e\rangle_d=r+e,
$$
because $2e\le r+e\leq g<d$.  Therefore, the possible non-zero residues of $j$
are
\begin{equation}\label{P5 Jb-def}
\aleph_v =
\begin{cases}
\{0,1,\ldots,g-2v\},&
0\le v\le \lfloor g/2\rfloor,\\[2mm]
\{2e,2e+1,\ldots,g\},&
e=d-v,\ 1\leq e\leq \lfloor g/2\rfloor,\\[2mm]
\varnothing,&\text{otherwise}.
\end{cases}
\end{equation}
Thus, only those residue classes belonging to $\aleph_v$ can contribute to the final block. If
$$\{0,1,\ldots,R\}\supseteq\aleph_v,$$
then the final block contains the entire non-zero support of a complete block. Hence, its total contribution vanishes by the Lemma~\ref{P6 lemma 4}. On the other hand, if
$$\{0,1,\ldots,R\}\cap\aleph_v=\varnothing,$$
then every summand in the final block is zero and, therefore the contribution of the final block also vanishes. Now suppose that $\mathcal{T}=(m-1)/4$. Then
\begin{align*}
\mathcal{E}
=\frac{m-1}{4}-s=\left(\frac{N-1}{4}-u\right)d+g-v.
\end{align*}
Thus, the final residue bound is
$$R=g-v\qquad\text{if }v\le g,$$
while, if $v=d-e>g$, we have
$$R=g+e.$$
If $v\le g$, then 
$$R=g-v\ge g-2v,$$
so that the prefix $\{0,1,\ldots,R\}$ contains the entire first set in \eqref{P5 Jb-def}. If, on the other hand, $v=d-e>g$, then
$$R=g+e\ge g,$$
and hence the prefix contains the entire second set in \eqref{P5 Jb-def}. Therefore, in both cases,   the final incomplete block contributes zero.
\par Therefore,
$$\Omega_{\xi}^1(\mathcal{T})=0
\qquad\text{and}\qquad
\Omega_{\xi}^2(\mathcal{T})=0, $$
for every primitive $d$-th root of unity $\xi$, where $d\mid m$ and $d>1$. Consequently, for each such $d$, the cyclotomic polynomial $\Phi_d(q)$ divides the numerator of the reduced form of both $\Omega_q^1(\mathcal{T})$ and $\Omega_q^2(\mathcal{T})$, while their corresponding denominators are relatively prime to $\Phi_d(q)$.
Since cyclotomic polynomials are pairwise coprime and
\begin{equation*}
[m]=\prod_{\substack{d\mid m\ d>1}}\Phi_d(q),
\end{equation*}
it follows that $[m]$ divides the reduced numerators of both $\Omega_q^1(\mathcal{T})$ and $\Omega_q^2(\mathcal{T})$. Moreover, their reduced denominators are relatively prime to $[m]$. This completes the proof of our claim \eqref{P6 claim}.

\begin{proof}[Proof of Theorems \ref{P6 Main Thm 1} and \ref{P6 Main Thm 2}]
We have just shown that
\begin{align} \label{P6 laststep}
    \Omega_q^1(\mathcal{T}) \equiv 0 \pmod{[m]}\quad \text{and} \quad \Omega_q^2(\mathcal{T}) \equiv 0 \pmod{[m]}.
\end{align}
Since $[m+4s](q;q^4)_{\frac{m-1}{4}+s} = [m](q;q^4)_{\frac{m-1}{4}} (q^{m+4};q^4)_s$ is congruent to $0$ modulo $[m].$ By combining \eqref{P6 laststep} with Theorems \ref{Tang G2 Thm 1} and \ref{WangXu Thm}, we complete the proof because least common multiple of $\Phi_m(q)^3$ (or $\Phi_m(q)^5$) with $[m]$ are $[m]\Phi_m(q)^2$ (or $[m]\Phi_m(q)^4$), respectively.
\end{proof}

\section*{Declarations}
	
	{\bf Author contributions:} All authors have contributed equally to the preparation of this manuscript.\\
{\bf Funding:} No funding was received for this research.\\
{\bf Conflicts of Interest:} The authors do not have any conflicts
of interest.
	
	{\bf Data Availability Statement:}
	No datasets were generated or analyzed in this study. The results are obtained entirely through theoretical and mathematical methods.

\end{document}